\newtheorem{corollary}{Corollary}
\newtheorem{lemma}{Lemma}
\newtheorem{proposition}{Proposition}
\newenvironment{proof}[1][Proof]{\noindent\textbf{#1.} }{\ \rule{0.5em}{0.5em}}
\begin{document}

\title{The annihilation operator for certain family of $q$-Hermite Sobolev-type orthogonal polynomials}
\author{Carlos Hermoso$^{1}$, Anier Soria--Lorente$^{2}$ \\
\\
$^{1}$Departamento de Física y Matemáticas, Universidad de Alcalá\\
Ctra. Madrid-Barcelona, Km. 33,600\\
28805 - Alcalá de Henares, Madrid, Spain\\
carlos.hermoso@uah.es\\
\\
$^{2}$Departamento de Tecnología, Universidad de Granma\\
Ctra. de Bayamo-Manzanillo, Km. 17,500\\
85100 - Bayamo, Cuba\\
asorial@udg.co.cu, asorial1983@gmail.com}
\date{\emph{\today}}
\maketitle

\begin{abstract}
We present a new family $\left\{ S_{n}(x;q)\right\} _{n\geq 0}$ of monic polynomials in $x$, orthogonal with respect to a Sobolev-type inner product related to the $q$-Hermite I orthogonal polynomials, involving a first-order $q$-derivative on a mass-point $\alpha \in \mathbb{R}$ located out of the corresponding orthogonality interval $[-1,1]$, for some fixed real number $q \in (0, 1)$. We present connection formulas, and the annihilation operator for this non-standard orthogonal polynomial family.

\textit{Key words and phrases.} Orthogonal polynomials, discrete Sobolev-type polynomials, q-Hermite polynomials, annihilation operator.

\textrm{2010 AMS Subject Classification. 33C45, 33C47.}
\end{abstract}



\section{Introduction}

\label{S1-Intro}



Let $\left\{ H_{n}(x;q)\right\} _{n\geq 0}$ be the sequence of \textit{monic 
$q$-Hermite I polynomials} of degree $n$, orthogonal with respect to the
inner product%
\begin{equation}
\langle f,g\rangle =\int_{-1}^{1}f(x;q)g(x;q)(qx,-qx;q)_{\infty
}d_{q}x,\;0<q<1,\quad f,g\in \mathbb{P}  \label{qHermiteInnP}
\end{equation}%
in the linear space $\mathbb{P}$ of polynomials of real coefficients. This
family of $q$-hypergeometric polynomials was introduced at the end of the
nineteenth century by L. J. Rogers (see \cite{R1893-1} \cite{R1894-2}, and 
\cite{R1895-3}) and studied throughout the twentieth century by Szeg\H{o}
(see \cite{Sz1926}) and Carlitz (see \cite{C1956} \cite{C1957}, and \cite%
{C1972}), currently playing and important role in different fields such as,
for instance, classical and non-commutative probability theory, combinatory
or quantum-physics. The monic $q$-Hermite I polynomials can be also
described as the family of polynomials satisfying the orthogonality relation%
\begin{equation}
\int_{-1}^{1}H_{m}(x;q)H_{n}(x;q)(qx,-qx;q)_{\infty
}d_{q}x=(1-q)(q;q)_{n}(q,-1,-q;q)_{\infty }q^{\binom{n}{2}}\delta _{m,n},
\label{qHermiteInnP2}
\end{equation}%
where $\delta _{m,n}$ is the well known Kronecker-delta symbol (see \cite{BI-CJM96}). $\left\{
H_{n}(x;q)\right\} _{n\geq 0}$ form a system of polynomials, orthogonal with
respect to the measure $(qx,-qx;q)_{\infty }d_{q}x$, where $q\in \mathbb{R}$
stands for its unique fixed parameter, for which we assume that $0<q<1$.
This last condition means that they belong to the class of orthogonal
polynomial solutions of certain second order $q$-difference equations, known
in the literature as the Hahn class (see \cite{H1949} and \cite{KLS2010}). It
is also important to highlight that, for $q=1$ one recovers the classical
Hermite polynomials and, for $q=0$ one recovers a re-scaled version of the
Chebyshev polynomials of the second kind.

On the other hand, the introduction of derivatives or $q$-derivatives acting on orthogonal polynomials constitutes a powerful tool to obtain new families of orthogonal polynomials. The polynomials resulting under the action of the $q$-derivatives on the $q$-Hermite family, as well as their orthogonality properties, have recently been studied (see \cite{Adig}). Furthermore, $q$-derivatives can also appear involved in the discrete part of the measure, thus modifying the continuous part and giving rise to Sobolev-type perturbations of the $q$-Hermite I polynomials. For a recent and comprehensive study on general discrete Sobolev orthogonal polynomials including difference operators see \cite{Galin}.

In this contribution, we inquired on the following Sobolev-type modification%
\begin{equation}
\left\langle f,g\right\rangle _{\lambda
}=\int_{-1}^{1}f(x;q)g(x;q)(qx,-qx;q)_{\infty }d_{q}x+\lambda ({\mathscr D}%
_{q}f)(\alpha ;q)({\mathscr D}_{q}g)(\alpha ;q)  \label{qHermiteSobType}
\end{equation} of (\ref{qHermiteInnP}), where $\alpha \in \mathbb{R}\setminus \lbrack -1,1]$%
, $\lambda \in \mathbb{R}^{+}$ and ${\mathscr D}_{q}$ denotes the $q$%
-derivative operator, as will be defined below. In the sequel, we will
denote by $\left\{ S_{n}(x;q)\right\} _{n\geq 0}$ the sequence of monic
polynomials orthogonal with respect to (\ref{qHermiteSobType}).

The perturbation above, induced by discrete measures with a single mass-point and first-order $q$-derivatives, addresses a first step in the analysis of the Sobolev-type modifications involving $q$-derivatives. Thus, the
main goal of the paper will be to study this type of perturbations by providing the $q$-Hermite I-Sobolev-type polynomials $S_{n}(x;q)$, and to establish some interesting results concerning this family. We will assume here that the discrete mass $\lambda $ is located at a point $\alpha $ outside the support $[-1,1]$ of the $q$-discrete measure $(qx,-qx;q)_{\infty }d_{q}x$.

The structure of the manuscript is as follows. In Section \ref{S2-qCalculus} we summarize few preliminaries of the $q$-calculus which will be useful throughout the manuscript. In Section \ref{S3-qHermiteSobType} we obtain a structure relation for the $q$-derivatives of $S_{n}(x;q)$. In Section \ref{S4-Connectionf} we provide connection formulas between orthogonal polynomials of $q$-Hermite I and $q$-Hermite Sobolev-type families. Finally, in Section \ref{S5-Ladder} we obtain an explicit expression for the annihilation operator for the family ${S_{n}(x;q)}_n \geq 0$, in the same fashion as in \cite{HHL-PROMS10} and \cite{HHLS-M20}.



\section{Preliminaries results from $q$-calculus}

\label{S2-qCalculus}



In this section, we summarize few concepts and definitions from $q$-calculus, which will be needed in the sequel.

The $q$-number $[n]_{q}$, is defined by \cite{KLS2010} 
\begin{equation*}
\lbrack n]_{q}=%
\begin{cases}
\displaystyle0, & \mbox{ if }n=0, \\ 
&  \\ 
\displaystyle\frac{1-q^{n}}{1-q}=\sum_{0\leq k\leq n-1}q^{k}, & \mbox{ if }%
n\geq 1.%
\end{cases}%
\end{equation*}

A $q$-analogue of the factorial of $n$ is given by%
\begin{equation*}
\lbrack n]_{q}!=%
\begin{cases}
\displaystyle1, & \mbox{if }n=0, \\ 
&  \\ 
\displaystyle\lbrack n]_{q}[n-1]_{q}\cdots \lbrack 2]_{q}[1]_{q}, & 
\mbox{if
}n\geq 1.%
\end{cases}%
\end{equation*}%
Similarly, a $q$-analogue of the Pochhammer symbol, or shifted factorial, 
\cite{KLS2010} is defined by%
\begin{equation*}
\left( a;q\right) _{n}=%
\begin{cases}
\displaystyle1, & \mbox{if }n=0, \\ 
&  \\ 
\displaystyle\prod_{0\leq j\leq n-1}\left( 1-aq^{j}\right) , & \mbox{if }%
n\geq 1, \\ 
&  \\ 
\displaystyle(a;q)_{\infty }=\prod_{j\geq 0}(1-aq^{j}), & \mbox{if }n=\infty %
\mbox{ and }|a|<1.%
\end{cases}%
\end{equation*}%
Moreover, we will use the following notation 
\begin{equation*}
(a_{1},\ldots ,a_{r};q)_{k}=\prod_{1\leq j\leq r}(a_{j};q)_{k}.
\end{equation*}

The $q$-falling factorial (see \cite{arvesu2013first}) is defined by 
\begin{equation*}
\left[ s\right] _{q}^{(n)}=\frac{(q^{-s};q)_{n}}{(q-1)^{n}}q^{ns-\binom{n}{2}%
},\quad n\geq 1.
\end{equation*}%

The $q$-binomial coefficient is given by (see \cite{KLS2010})%
\begin{equation*}
\begin{bmatrix}
k \\ 
n%
\end{bmatrix}%
_{q}=\frac{(q;q)_{n}}{(q;q)_{k}(q;q)_{n-k}}=\frac{[n]_{q}!}{%
[k]_{q}![n-k]_{q}!}=%
\begin{bmatrix}
k \\ 
n-k%
\end{bmatrix}%
_{q},\quad k=0,1,\ldots ,n,
\end{equation*}%
where $n$ denotes a nonnegative integer.

The Jackson-Hahn-Cigler $q$-subtraction (see, for example \cite{ernst2007q} or \cite[Def. 6]{E-PEAS09}) and the references
given there)%
\begin{equation*}
\left( x\boxminus _{q}y\right) ^{n}=\prod_{0\leq j\leq n-1}\left(
x-yq^{j}\right) =x^{n}(y/x;q)_{n}=\sum_{0\leq k\leq n}%
\begin{bmatrix}
k \\ 
n%
\end{bmatrix}%
_{q}q^{\binom{k}{2}}(-y)^{k}x^{n-k}.
\end{equation*}%
The $q$-derivative \cite{KLS2010} or the Euler--Jackson $q$-difference
operator%
\begin{equation*}
({\mathscr D}_{q}f)(z)=%
\begin{cases}
\displaystyle\frac{f(qz)-f(z)}{(q-1)z}, & \text{if}\ z\neq 0,\ q\neq 1, \\ 
&  \\ 
f^{\prime }(z), & \text{if}\ z=0,\ q=1,%
\end{cases}%
\end{equation*}%
where ${\mathscr D}_{q}^{0}f=f$, ${\mathscr D}_{q}^{n}f={\mathscr D}_{q}({%
\mathscr D}_{q}^{n-1}f)$, with $n\geq 1$, and%
\begin{equation*}
\lim\limits_{q\rightarrow 1}{\mathscr D}_{q}f(z)=f^{\prime }(z).
\end{equation*}%
Furthermore, one has the following properties%
\begin{equation}
{\mathscr D}_{q}[f(\gamma z)]=\gamma ({\mathscr D}_{q}f)(\gamma z),\quad
\forall \,\,\gamma \in \mathbb{C},  \label{cadrule}
\end{equation}%
\begin{equation}
{\mathscr D}_{q}f(z)={\mathscr D}_{q^{-1}}f(qz)\Leftrightarrow {\mathscr D}%
_{q^{-1}}f(z)={\mathscr D}_{q}f(q^{-1}z),  \label{DqProp}
\end{equation}%
\begin{eqnarray}
{\mathscr D}_{q}[f(z)g(z)] &=&f(qz){\mathscr D}_{q}g(z)+g(z){\mathscr D}%
_{q}f(z)  \label{prodqD} \\
&=&f(z){\mathscr D}_{q}g(z)+g(qz){\mathscr D}_{q}f(z),  \notag
\end{eqnarray}%
and the following interesting property
\begin{equation}
{\mathscr D}_{q^{-1}}({\mathscr D}_{q}f)(z)=q{\mathscr D}_{q}({\mathscr D}%
_{q^{-1}}f)(z),  \label{DqProOK}
\end{equation}%
The $q$-Taylor formula (see \cite[Th. 6.3]{srivastava2011zeta}), with the
Cauchy remainder term, which is defined by%
\begin{equation*}
f(x)=\sum_{k=0}^{n}\frac{({\mathscr D}_{q}^{k}f)(a)}{[k]_{q}!}(x\boxminus
_{q}a)^{k}+\frac{1}{[n]_{q}!}\int_{a}^{x}({\mathscr D}_{q}^{n+1}f)(t)\cdot
(x\boxminus _{q}qt)^{n}d_{q}t.
\end{equation*}%
The Jackson $q$-integral is given by%
\begin{equation*}
\int_{0}^{z}f(x)d_{q}x=(1-q)z\sum_{k=0}^{\infty }q^{k}f(q^{k}z),
\end{equation*}%
which in a generic interval $[a,b]$ is given by%
\begin{equation*}
\int_{a}^{b}f(x)d_{q}x=\int_{0}^{b}f(x)d_{q}x-\int_{0}^{a}f(x)d_{q}x.
\end{equation*}



\section{$q$-Hermite I orthogonal polynomials}

\label{S3-qHermiteSobType}



After the above $q$-calculus introduction, we continue by giving several
aspects and properties of the $q$-Hermite I polynomials $\{H_{n}(x;q)\}_{n%
\geq 0}$.\\

The monic $q$-Hermite I polynomials can be given by means of their generating function:%
\begin{equation*}
\sum_{n=0}^{\infty }{H_{n}(x;q)\dfrac{t^{n}}{(q;q)_{n}}}=\prod_{n=0}^{\infty
}{\dfrac{1}{1-2xtq^{n}+t^{2}q^{2n}}.}
\end{equation*}


We also have the following classic and well-known results in the literature for this family of polynomials:
\begin{proposition}
\label{S1-Proposition11} Let $\{H_{n}(x;q)\}_{n\geq 0}$ be the sequence of $%
q $-Hermite I polynomials of degree $n$. Then next statements hold.

\begin{enumerate}
\item The recurrence relation \cite{KLS2010} 
\begin{equation}
xH_{n}(x;q) =H_{n+1}(x;q)+\gamma_n H_{n-1}(x;q),  \label{ReR}
\end{equation}
with initial conditions $H_{-1}(x;q)=0$ and $H_{0}(x;q)=1$. Here, $%
\gamma_n=q^{n-1}(1-q^{n})$.

\item Squared norm \cite{KLS2010}. For every $n\in \mathbb{N}$, 
\begin{equation*}
||H_{n}||^{2}=(1-q)(q;q)_n(q,-1,-q;q)_{\infty}q^{\binom{n}{2}}.
\end{equation*}

\item Forward shift operator \cite{KLS2010} 
\begin{equation}
{\mathscr D}_{q}^{k}H_{n}(x;q) =[n]_{q}^{(k)}H_{n-k}(x;q) ,  \label{FSop}
\end{equation}
where 
\begin{equation*}
\left[ n\right] _{q}^{(k)}=\frac{(q^{-n};q)_{k}}{(q-1)^{k}}q^{kn-\binom{k}{2}
},
\end{equation*}
denote the $q$-falling factorial \cite{arvesu2013first}. Observe that $\left[ n\right] _{q}^{(1)}=\left[ n\right] _{q}$.

\item Second-order $q$-difference equation \cite{KLS2010} 
\begin{equation*}
\sigma(x){\mathscr D}_q{\mathscr D}_{q^{-1}}H_{n}(x;q)+\tau(x){\mathscr D}%
_{q}H_{n}(x;q)+\lambda_{n,q}H_{n}(x;q)=0,
\end{equation*}
where $\sigma(x)=x^2-1$, $\tau(x)=(1-q)^{-1}x$ and $%
\lambda_{n,q}=[n]_q([1-n]_q\sigma^{\prime \prime }/2-\tau^{\prime })$.
\end{enumerate}
\end{proposition}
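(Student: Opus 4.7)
The plan is to establish each of the four items by appealing to the orthogonality relation (\ref{qHermiteInnP2}) together with the $q$-calculus tools collected in Section \ref{S2-qCalculus}. Since $\{H_n(x;q)\}_{n\geq 0}$ is a monic orthogonal family, Favard's theorem provides a three-term recurrence of the form $xH_n(x;q)=H_{n+1}(x;q)+\beta_n H_n(x;q)+\gamma_n H_{n-1}(x;q)$. To obtain (\ref{ReR}), I would first observe that the weight $(qx,-qx;q)_\infty$ is even in $x$ on the symmetric interval $[-1,1]$, which forces $H_n(-x;q)=(-1)^n H_n(x;q)$ and hence $\beta_n=0$; the coefficient $\gamma_n$ then equals $\|H_n\|^2/\|H_{n-1}\|^2$, so its determination is reduced to computing the squared norms.

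For item (2), I would verify the closed form for $\|H_n\|^2$ by induction on $n$, using $\|H_n\|^2=\gamma_n\|H_{n-1}\|^2=q^{n-1}(1-q^n)\|H_{n-1}\|^2$ together with the base case $\|H_0\|^2=\int_{-1}^{1}(qx,-qx;q)_\infty d_q x$, which is a standard evaluation of a Jackson $q$-integral. Iterating yields the stated value $(1-q)(q;q)_n(q,-1,-q;q)_\infty q^{\binom{n}{2}}$, which in turn confirms the value of $\gamma_n$ consistently with (\ref{ReR}).

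The technical heart of the proposition lies in item (3). It suffices to treat $k=1$ and then iterate. I would first note that $\mathscr{D}_q H_n(x;q)$ is a polynomial of degree $n-1$ with leading coefficient $[n]_q$, and then show that it is orthogonal to every polynomial of degree at most $n-2$ with respect to (\ref{qHermiteInnP}); these two facts force it to coincide with $[n]_q H_{n-1}(x;q)$. The orthogonality step is the main obstacle I anticipate: it is handled by a $q$-integration-by-parts argument, exploiting the Pearson-type equation satisfied by the weight $(qx,-qx;q)_\infty$ together with the $q$-Leibniz rule (\ref{prodqD}); the boundary contributions at $\pm 1$ vanish because the weight does. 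Iterating the $k=1$ identity $k$ times telescopes the factors $[n]_q[n-1]_q\cdots[n-k+1]_q$ into the $q$-falling factorial $[n]_q^{(k)}$.

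Finally, item (4) is the $q$-Sturm--Liouville equation characteristic of the Hahn class, which can be derived either from the general theory as in \cite{KLS2010,H1949}, or directly by combining the shift relation of item (3), the property (\ref{DqProOK}), and the Pearson-type equation for the weight. Matching the leading coefficient of $x^n$ on both sides then pins down the eigenvalue $\lambda_{n,q}=[n]_q([1-n]_q\sigma''/2-\tau')$. As this proposition compiles classical facts available in \cite{KLS2010}, I expect no essential difficulty beyond the $q$-integration-by-parts step singled out above.
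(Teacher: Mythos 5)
Your overall strategy (Favard plus symmetry of the weight for item~(1), induction for item~(2), ``degree plus orthogonality to lower degrees'' for item~(3), and the Hahn-class equation for item~(4)) is the standard textbook route; note, for comparison, that the paper offers no proof at all for this proposition, simply citing \cite{KLS2010}, so any complete argument here is necessarily more self-contained than the source. However, as written your proposal contains a genuine circularity between items~(1) and~(2). You determine $\gamma_n$ as the ratio $\|H_n\|^2/\|H_{n-1}\|^2$, ``reducing'' item~(1) to the computation of the norms; but your induction for item~(2) then uses precisely $\|H_n\|^2=\gamma_n\|H_{n-1}\|^2$ with the claimed value $\gamma_n=q^{n-1}(1-q^n)$. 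Neither quantity is ever computed independently, so beyond the base case $\|H_0\|^2$ the two items merely prop each other up. The repair is immediate in this paper's setting: the squared norm is already displayed in the orthogonality relation (\ref{qHermiteInnP2}) (take $m=n$), which the paper adopts as characterizing the family; item~(2) then needs no induction, and $\gamma_n=\langle xH_n,H_{n-1}\rangle/\|H_{n-1}\|^2=\|H_n\|^2/\|H_{n-1}\|^2=q^{n-1}(1-q^n)$ follows by the usual monic-OPS computation. Absent that, you would need an independent determination of $\gamma_n$ (e.g.\ from the explicit basic hypergeometric representation in \cite{KLS2010}) before the induction can run.

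The second problem sits exactly at the step you yourself single out as the crux of item~(3): the claim that ``the boundary contributions at $\pm 1$ vanish because the weight does'' is false. The weight $w(x)=(qx,-qx;q)_{\infty}=(q^{2}x^{2};q^{2})_{\infty}$ satisfies $w(\pm 1)=(q;q)_{\infty}(-q;q)_{\infty}\neq 0$ --- indeed this nonvanishing constant is visible inside the norm of item~(2) through the factor $(q,-1,-q;q)_{\infty}$. What does vanish at the endpoints is the combination $\sigma(x)w(x)=(x^{2}-1)(q^{2}x^{2};q^{2})_{\infty}=-(x^{2};q^{2})_{\infty}$, because $\sigma(\pm 1)=0$; it is this product, supplied by the Pearson-type equation, that must carry the boundary terms in the $q$-integration by parts, and with that substitution your argument for the $k=1$ case of (\ref{FSop}) goes through. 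The rest of the proposal is sound: the telescoping $[n]_q[n-1]_q\cdots[n-k+1]_q=[n]_q^{(k)}$ agrees with the definition of the $q$-falling factorial (a short computation with $(q^{-n};q)_k$ confirms it), and matching leading coefficients to identify $\lambda_{n,q}=[n]_q([1-n]_q\sigma''/2-\tau')$ in item~(4) is routine once item~(3) and (\ref{DqProOK}) are in hand.
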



A Christoffel-Darboux formula for this family can also be established:
\begin{proposition}[Christoffel-Darboux formula]
\label{S1-Proposition12} Let $\{H_{n}(x;q)\}_{n\geq 0}$ be the sequence of $%
q $-Hermite I polynomials. If we denote the $n$-th reproducing kernel by 
\begin{equation*}
K_{n,q}(x,y)=\sum_{k=0}^{n}\frac{H_{k}(x;q)H_{k}(y;q)}{||H_{k}||^{2}}.
\end{equation*}%
Then, for all $n\in \mathbb{N}$, it holds that%
\begin{equation}
K_{n,q}(x,y)=\frac{H_{n+1}(x;q)H_{n}(y;q)-H_{n+1}(y;q)H_{n}(x;q)}{\left(
x-y\right) ||H_{n}||^{2}}.  \label{CDarb}
\end{equation}
\end{proposition}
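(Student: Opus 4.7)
The plan is to use the standard telescoping argument based on the three-term recurrence (\ref{ReR}). Concretely, I would write the recurrence at level $n$ evaluated at $x$ and at $y$, multiply the first by $H_{n}(y;q)$ and the second by $H_{n}(x;q)$, and subtract:
\begin{equation*}
(x-y)H_{n}(x;q)H_{n}(y;q)=\bigl[H_{n+1}(x;q)H_{n}(y;q)-H_{n+1}(y;q)H_{n}(x;q)\bigr]+\gamma_{n}\bigl[H_{n-1}(x;q)H_{n}(y;q)-H_{n-1}(y;q)H_{n}(x;q)\bigr].
\end{equation*}
The cross terms that would have mixed $H_{n}(x;q)H_{n}(y;q)$ cancel automatically, which is the reason this maneuver works.

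Next, I would divide by $\|H_{n}\|^{2}$ and exploit the crucial identity $\gamma_{n}/\|H_{n}\|^{2}=1/\|H_{n-1}\|^{2}$, which is a direct consequence of items (1) and (2) of Proposition \ref{S1-Proposition11}; indeed,
\begin{equation*}
\frac{\|H_{n}\|^{2}}{\|H_{n-1}\|^{2}}=\frac{(q;q)_{n}}{(q;q)_{n-1}}\,q^{\binom{n}{2}-\binom{n-1}{2}}=(1-q^{n})q^{n-1}=\gamma_{n}.
\end{equation*}
Setting $T_{n}(x,y)=\bigl(H_{n+1}(x;q)H_{n}(y;q)-H_{n+1}(y;q)H_{n}(x;q)\bigr)/\|H_{n}\|^{2}$, the previous identity rearranges to
\begin{equation*}
(x-y)\frac{H_{n}(x;q)H_{n}(y;q)}{\|H_{n}\|^{2}}=T_{n}(x,y)-T_{n-1}(x,y).
\end{equation*}

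Finally, I would sum this telescoping identity from $k=0$ up to $k=n$. Using the initial condition $H_{-1}(x;q)=0$, which forces $T_{-1}(x,y)=0$, the sum collapses to $(x-y)K_{n,q}(x,y)=T_{n}(x,y)$, and dividing by $x-y$ yields formula (\ref{CDarb}). I do not anticipate any serious obstacle: the proof is purely algebraic and the only slightly non-trivial step is verifying that $\gamma_{n}\|H_{n}\|^{-2}=\|H_{n-1}\|^{-2}$, which is the hallmark property ensuring that the recurrence coefficient $\gamma_{n}$ is positive and equals the squared-norm ratio. The argument is identical in spirit to the classical proof for Hermite or general orthogonal polynomials on the real line, so no $q$-specific machinery (beyond the known recurrence and norm) is required.
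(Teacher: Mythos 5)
Your proof is correct. Note that the paper itself states Proposition \ref{S1-Proposition12} without proof, treating it as classical, so there is no internal argument to compare against; your telescoping proof via the three-term recurrence is precisely the standard one the authors implicitly invoke. The key computation checks out: from item (2) of Proposition \ref{S1-Proposition11}, $\|H_{n}\|^{2}/\|H_{n-1}\|^{2}=(1-q^{n})\,q^{\binom{n}{2}-\binom{n-1}{2}}=(1-q^{n})q^{n-1}=\gamma_{n}$, since $\binom{n}{2}-\binom{n-1}{2}=n-1$, and the subtraction step and telescoping sum are all in order. One cosmetic remark: $T_{-1}(x,y)$ formally contains the undefined quantity $\|H_{-1}\|^{2}$ in its denominator, so rather than saying $H_{-1}\equiv 0$ forces $T_{-1}=0$, it is cleaner to observe that at the base step $k=0$ the recurrence term $\gamma_{0}H_{-1}(x;q)$ vanishes outright (indeed $\gamma_{0}=q^{-1}(1-q^{0})=0$ as well), so $(x-y)H_{0}(x;q)H_{0}(y;q)/\|H_{0}\|^{2}=T_{0}(x,y)$ directly and no $T_{-1}$ need ever be introduced. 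This is a trivially repaired phrasing issue, not a gap.
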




Concerning the partial $q$-derivatives of $K_{n,q}(x,y)$, we use the
following notation
\begin{eqnarray*}
K_{n,q}^{(i,j)}(x,y) &=&{\mathscr D}^{j}_{q,y}({\mathscr D}^{i}_{q,x}K_{n,q}(x,y)) \\
&=&\sum_{k=0}^{n}\frac{{\mathscr D}_{q}^{i}H_{k}(x;q){\mathscr D}%
_{q}^{j}H_{k}(y;q)}{||H_{k}||^{2}}.
\end{eqnarray*}

In order to establish connection formulas between the polynomials of $\{S_{n}(x;q)\}_{n\geq 0}$ and the polynomials of $\{H_{n}(x;q)\}_{n\geq 0}$, we first need to relate the first-order partial derivatives of the kernel polynomials of $\{H_{n}(x;q)\}_{n\geq 0}$ to the polynomials of this family. The next result is, in fact, a connection formula between the partial $q$-derivative with respect to one variable of the kernel polynomials and the polynomials of $\{H_{n}(x;q)\}_{n\geq 0}$. The proof is quite involved, and follows from the techniques recently presented in \cite{HHLS-M20} .



\begin{proposition}
\label{S1-LemmaKernel0j} Let $\{H_{n}(x;q)\}_{n\geq 0}$ be the sequence of $%
q $-Hermite I polynomials of degree $n$. Then following statements hold, for
all $n\in \mathbb{N}$, 
\begin{equation}
K_{n-1,q}^{(0,1)}(x,y)={\mathcal{A}}_{n}^{(1)}(x,y)H_{n}(x;q)+{\mathcal{B}}%
_{n}^{(1)}(x,y)H_{n-1}(x;q), \label{Kernel0j}
\end{equation}
where 
\begin{eqnarray*}
{\mathcal{A}}_{n}^{(1)}(x,y)=\frac{\left[ 1\right] _{q}!}{||H_{n-1}||^{2}
\left( x\boxminus _{q}y\right) ^{2}}\sum_{k=0}^{1}\frac{{\mathscr D}%
_{q}^{k}H_{n-1}(y;q)}{\left[ k\right] _{q}!}(x\boxminus _{q}y)^{k}= \\
\frac{1}{||H_{n-1}||^{2}
\left( x\boxminus _{q}y\right) ^{2}}\left(H_{n-1}(y;q)+{\mathscr D}%
_{q}H_{n-1}(y;q)(x\boxminus _{q}y)\right)
,
\end{eqnarray*}
and 
\begin{eqnarray*}
{\mathcal{B}}_{n}^{(1)}(x,y)=-\frac{\left[ j\right] _{1}!}{%
||H_{n-1}||^{2}\left( x\boxminus _{q}y\right) ^{2}}\sum_{k=0}^{1}\frac{{%
\mathscr D}_{q}^{k}H_{n}(y;q)}{\left[ k\right] _{q}!}(x\boxminus _{q}y)^{k}= \\
-\frac{1}{%
||H_{n-1}||^{2}\left( x\boxminus _{q}y\right) ^{2}}\left( x\boxminus _{q}y\right) ^{2}\left(H_{n}(y;q)+{\mathscr D}%
_{q}H_{n}(y;q)(x\boxminus _{q}y)\right).
\end{eqnarray*}
\end{proposition}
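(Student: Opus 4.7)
The plan is to start from the Christoffel--Darboux identity (\ref{CDarb}) and apply the partial operator ${\mathscr D}_{q,y}$ to both sides. Setting $F(y) := H_n(x;q)H_{n-1}(y;q) - H_n(y;q)H_{n-1}(x;q)$ and $G(y) := x-y$, formula (\ref{CDarb}) reads $||H_{n-1}||^2\,K_{n-1,q}(x,y) = F(y)/G(y)$, so computing $K_{n-1,q}^{(0,1)}(x,y) = {\mathscr D}_{q,y}K_{n-1,q}(x,y)$ reduces to applying ${\mathscr D}_{q}$ to the quotient $F/G$, with $x$ regarded as a parameter.

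First I would derive a $q$-quotient rule by writing $F = (F/G)\cdot G$ and applying the product rule (\ref{prodqD}), which yields
$${\mathscr D}_q\!\left(\frac{F}{G}\right)\!(y) = \frac{G(qy)\,{\mathscr D}_q F(y) - F(qy)\,{\mathscr D}_q G(y)}{G(y)\,G(qy)}.$$
Since ${\mathscr D}_q G(y) = -1$ and $G(y)G(qy) = (x-y)(x-qy) = (x\boxminus_q y)^2$, the numerator simplifies to $(x-qy)\,{\mathscr D}_q F(y) + F(qy)$. Next, from the definition of the $q$-derivative one has the shift identity $f(qy) = f(y) + (q-1)y\,{\mathscr D}_q f(y)$, so $F(qy) = F(y) + (q-1)y\,{\mathscr D}_q F(y)$. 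Substituting and invoking $(x-qy) + (q-1)y = x-y$, the numerator collapses to $F(y) + (x-y)\,{\mathscr D}_q F(y)$.

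Finally, I group the surviving terms by $H_n(x;q)$ and $H_{n-1}(x;q)$ using the explicit form of $F$ together with ${\mathscr D}_q F(y) = H_n(x;q){\mathscr D}_q H_{n-1}(y;q) - H_{n-1}(x;q){\mathscr D}_q H_n(y;q)$, obtaining
$$F(y) + (x-y){\mathscr D}_q F(y) = H_n(x;q)\bigl[H_{n-1}(y;q) + {\mathscr D}_q H_{n-1}(y;q)(x-y)\bigr] - H_{n-1}(x;q)\bigl[H_n(y;q) + {\mathscr D}_q H_n(y;q)(x-y)\bigr].$$
Dividing by $||H_{n-1}||^2(x\boxminus_q y)^2$ and recalling $(x\boxminus_q y)^1 = x-y$, this is precisely ${\mathcal A}_n^{(1)}(x,y)H_n(x;q) + {\mathcal B}_n^{(1)}(x,y)H_{n-1}(x;q)$, since the bracketed factors are the order-one $q$-Taylor polynomials of $H_{n-1}$ and $H_n$ at $y$.

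The main obstacle is the bookkeeping forced by the $q$-shift in the quotient rule: the evaluation at $qy$ rather than $y$ in the numerator must be absorbed into $F(y)$ plus a controlled multiple of ${\mathscr D}_q F(y)$, so that the factor $(x-qy)$ combines with $(q-1)y$ to restore the classical difference $(x-y)$. Once this cancellation is carried out, the remaining computation is a transparent regrouping that makes the truncated first-order $q$-Taylor polynomials of $H_{n-1}$ and $H_n$ appear in the coefficients, matching the stated form of ${\mathcal A}_n^{(1)}$ and ${\mathcal B}_n^{(1)}$.
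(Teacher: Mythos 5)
Your proof is correct, and it is worth noting that the paper itself gives no proof of this proposition at all: it merely remarks that "the proof is quite involved" and defers to the techniques of \cite{HHLS-M20}. Your argument supplies exactly the missing details for the $j=1$ case, and does so in the spirit of that reference: starting from the Christoffel--Darboux formula (\ref{CDarb}) at level $n-1$, deriving the $q$-quotient rule from the product rule (\ref{prodqD}), and using the shift identity $f(qy)=f(y)+(q-1)y\,{\mathscr D}_q f(y)$ so that $(x-qy)+(q-1)y=x-y$ collapses the numerator to $F(y)+(x-y){\mathscr D}_qF(y)$ --- at which point the truncated first-order $q$-Taylor sums appearing in ${\mathcal A}_n^{(1)}$ and ${\mathcal B}_n^{(1)}$ emerge by direct regrouping, since $[0]_q!=[1]_q!=1$ and $(x\boxminus_q y)^1=x-y$. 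I verified each step: the quotient rule, ${\mathscr D}_q(x-y)=-1$ in $y$, and $G(y)G(qy)=(x-y)(x-qy)=(x\boxminus_q y)^2$ are all right. Two remarks. First, the shift identity is used at $y\neq 0$, but since both sides of (\ref{Kernel0j}) multiplied by $(x\boxminus_q y)^2$ are polynomials in $(x,y)$, the identity extends everywhere; a sentence to this effect would make the writeup airtight. Second, your computation confirms the \emph{summation} form of the coefficients and thereby exposes two misprints in the paper's statement: the factor $\left[j\right]_1!$ in ${\mathcal B}_n^{(1)}$ should read $\left[1\right]_q!$, and the second displayed expression for ${\mathcal B}_n^{(1)}$ carries a spurious extra factor $\left(x\boxminus_q y\right)^2$ that your derivation shows should not be there. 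Compared with invoking the general machinery of \cite{HHLS-M20} (designed for higher-order derivatives and kernels $K^{(0,j)}$ with arbitrary $j$), your route is more elementary and self-contained, at the price of being specific to the first-derivative case treated here.
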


Furthermore, we can state a connection formula between the partial $q$-derivatives with respect to both variables $x,y$ of the kernel polynomials, and the polynomials of $\{H_{n}(x;q)\}_{n\geq 0}$:
\begin{proposition}
\label{S1-LemmaKerneli2} Let $\{H_{n}(x;q)\}_{n\geq 0}$ be the sequence of $%
q $-Hermite I polynomials of degree $n$. Then following statement holds, for
all $n\in \mathbb{N}$, 
\begin{eqnarray}
K_{n-1,q}^{(1,1)}(x,y) &=&{\mathcal{C}}_{1,n}(x,y)H_{n}(x;q)+{\mathcal{D}}%
_{1,n}(x,y)H_{n-1}(x;q), \label{kernel1j}
\end{eqnarray}%
where%
\begin{equation*}
{\mathcal{C}}_{1,n}(x,y)={\mathscr D}_{q}{\mathcal{A}}%
_{n}^{(1)}(x,y)-[n-1]_{q}\gamma _{n-1}^{-1}{\mathcal{B}}_{n}^{(1)}(qx,y)
\end{equation*}%
and
\begin{equation*}
{\mathcal D}_{1,n}(x,y)=[n]_{q}{\mathcal{A}}_{n}^{(1)}(qx,y)+[n-1]_{q}\gamma
_{n-1}^{-1}x{\mathcal{B}}_{n}^{(1)}(qx,y)+{\mathscr D}_{q}{\mathcal{B}}%
_{n}^{(1)}(x,y).
\end{equation*}%
\end{proposition}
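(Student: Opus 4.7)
The plan is to obtain $K_{n-1,q}^{(1,1)}(x,y)$ by applying the operator ${\mathscr D}_{q,x}$ directly to the identity in Proposition \ref{S1-LemmaKernel0j}. Since $K_{n-1,q}(x,y)$ is a finite sum of products of polynomials in $x$ with polynomials in $y$, the operators ${\mathscr D}_{q,x}$ and ${\mathscr D}_{q,y}$ commute on it, so
\[
K_{n-1,q}^{(1,1)}(x,y)={\mathscr D}_{q,x}\,K_{n-1,q}^{(0,1)}(x,y)={\mathscr D}_{q,x}\bigl[{\mathcal A}_{n}^{(1)}(x,y)H_{n}(x;q)+{\mathcal B}_{n}^{(1)}(x,y)H_{n-1}(x;q)\bigr].
\]

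Next I would differentiate each of the two summands by the $q$-Leibniz rule \eqref{prodqD}, choosing the version in which the shift $x\mapsto qx$ falls on the coefficient factors ${\mathcal A}_{n}^{(1)}$ and ${\mathcal B}_{n}^{(1)}$. This yields
\[
{\mathscr D}_{q,x}\bigl[{\mathcal A}_{n}^{(1)}(x,y)H_{n}(x;q)\bigr]=H_{n}(x;q)\,{\mathscr D}_{q,x}{\mathcal A}_{n}^{(1)}(x,y)+{\mathcal A}_{n}^{(1)}(qx,y)\,{\mathscr D}_{q}H_{n}(x;q),
\]
and the analogous identity for the second summand. Using the forward-shift relation \eqref{FSop} in the form ${\mathscr D}_{q}H_{n}(x;q)=[n]_{q}H_{n-1}(x;q)$ and ${\mathscr D}_{q}H_{n-1}(x;q)=[n-1]_{q}H_{n-2}(x;q)$ turns these into expressions involving $H_{n}(x;q)$, $H_{n-1}(x;q)$, and an unwanted $H_{n-2}(x;q)$.

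To eliminate $H_{n-2}(x;q)$ I would invoke the three-term recurrence \eqref{ReR}, rewritten as $H_{n-2}(x;q)=\gamma_{n-1}^{-1}\bigl(xH_{n-1}(x;q)-H_{n}(x;q)\bigr)$, and substitute it into the offending term ${\mathcal B}_{n}^{(1)}(qx,y)[n-1]_{q}H_{n-2}(x;q)$. After this substitution the whole right-hand side is a linear combination of $H_{n}(x;q)$ and $H_{n-1}(x;q)$. Collecting coefficients produces exactly
\[
\textrm{coeff of }H_{n}(x;q):\quad {\mathscr D}_{q}{\mathcal A}_{n}^{(1)}(x,y)-[n-1]_{q}\gamma_{n-1}^{-1}{\mathcal B}_{n}^{(1)}(qx,y)={\mathcal C}_{1,n}(x,y),
\]
\[
\textrm{coeff of }H_{n-1}(x;q):\quad [n]_{q}{\mathcal A}_{n}^{(1)}(qx,y)+[n-1]_{q}\gamma_{n-1}^{-1}x\,{\mathcal B}_{n}^{(1)}(qx,y)+{\mathscr D}_{q}{\mathcal B}_{n}^{(1)}(x,y)={\mathcal D}_{1,n}(x,y),
\]
which is the claimed formula \eqref{kernel1j}.

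The calculation is essentially mechanical once the right tools are in place; the only step requiring care is the bookkeeping of the $q$-shift in the Leibniz rule, since choosing the wrong form of \eqref{prodqD} would leave an ${\mathcal A}_{n}^{(1)}(x,y)$ instead of ${\mathcal A}_{n}^{(1)}(qx,y)$ (and likewise for ${\mathcal B}_{n}^{(1)}$) and break the match with ${\mathcal C}_{1,n},\,{\mathcal D}_{1,n}$. I therefore expect the main obstacle — in practice, the only place one can easily slip — to be consistency in applying the $q$-product rule, together with correctly tracking which ${\mathcal A}_{n}^{(1)}$, ${\mathcal B}_{n}^{(1)}$ are evaluated at $x$ versus $qx$ before collecting the two coefficients.
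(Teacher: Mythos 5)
Your proposal is correct and follows essentially the same route as the paper's own proof: apply ${\mathscr D}_{q,x}$ to \eqref{Kernel0j}, use the $q$-Leibniz rule \eqref{prodqD} with the shift on the coefficient functions, then invoke \eqref{FSop} and the recurrence \eqref{ReR} to eliminate $H_{n-2}(x;q)$ and collect coefficients. Your version is in fact more explicit than the paper's (which compresses the recurrence substitution into ``we easily deduce''), and your coefficient bookkeeping matches ${\mathcal{C}}_{1,n}$ and ${\mathcal{D}}_{1,n}$ exactly.
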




\begin{proof}
Applying the $q$-derivative operator ${\mathscr D}_{q}$ to (\ref%
{Kernel0j}), together with the property (\ref{prodqD}), we have%
\begin{eqnarray*}
K_{n-1,q}^{(1,1)}(x,y) &=&{\mathcal{A}}_{n}^{(1)}(qx){\mathscr D}%
_{q}H_{n}(x;q)+H_{n}(x;q){\mathscr D}_{q}{\mathcal{A}}_{n}^{(1)}(x) \\
&&+{\mathcal{B}}_{n}^{(1)}(qx){\mathscr D}_{q}H_{n-1}(x;q)+H_{n-1}(x;q){%
\mathscr D}_{q}{\mathcal{B}}_{n}^{(1)}(x).
\end{eqnarray*}%
Using (\ref{FSop}) and (\ref{ReR}) we easily deduce (\ref{kernel1j}), and the proof is complete.
\end{proof}



\section{Connection formulas}

\label{S4-Connectionf}

\label{S4-ConnForm}



In the Introduction, we have presented the $q$-Hermite I-Sobolev type orthogonal
polynomials $\{S_{n}(x;q)\}_{n\geq 0}$, which are orthogonal with respect to
Sobolev-type inner product%
\begin{equation*}
\left\langle f,g\right\rangle _{\lambda
}=\int_{-1}^{1}f(x;q)g(x;q)(qx,-qx;q)_{\infty }d_{q}x+\lambda ({\mathscr D}%
_{q}f)(\alpha ;q)({\mathscr D}_{q}g)(\alpha ;q),
\end{equation*}%
where $\alpha \in \mathbb{R}\setminus \lbrack -1,1]$, $\lambda \in \mathbb{R}%
^{+}$ and $j\in \mathbb{N}$. Now, in this Section, we will express these $q$-Hermite I-Sobolev
type orthogonal polynomials $\{S_{n}(x;q)\}_{n\geq 0}$ in terms of the $q$%
-Hermite polynomials $\{H_{n}(x;q)\}_{n\geq 0}$, the kernel polynomials and
their corresponding derivatives. The main idea behind the identities
presented here, is that of expressing the new polynomials in
terms of another with well-established properties, to infer, from these,
those of the first one. As a consequence, we will obtain connection formulas between the polynomials of $\{S_{n}(x;q)\}_{n\geq 0}$ and $\{H_{n}(x;q)\}_{n\geq 0}$, as well as between the first-order $q$-derivatives of the polynomials of this new family and the polynomials of the old one.



We begin by relating the $q$-Hermite I-Sobolev type polynomials and their $q$-derivatives to the $q$-Hermite polynomials and the kernel polynomials:
\begin{proposition}
Let $\{S_{n}(x;q)\}_{n\geq 0}$ be the sequence of $q$-Hermite I-Sobolev type
orthogonal polynomials of degree $n$. Then, following statements hold, 
\begin{equation}
S_{n}(x;q)=H_{n}(x;q)-\lambda \frac{\lbrack n]_{q}^{(1)}H_{n-1}(\alpha ;q)}%
{1+\lambda K_{n-1,q}^{(1,1)}(\alpha ,\alpha )}K_{n-1,q}^{(0,1)}(x,\alpha ).
\label{ConxF1}
\end{equation}
\end{proposition}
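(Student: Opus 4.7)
The plan is to use the standard Fourier-expansion trick that works for every discrete Sobolev-type perturbation of a classical inner product. Since $S_n$ and $H_n$ are both monic of degree $n$, and $\{H_k\}_{k=0}^{n-1}$ is a basis of polynomials of degree less than $n$, I can write
\begin{equation*}
S_{n}(x;q)=H_{n}(x;q)+\sum_{k=0}^{n-1}c_{n,k}\,H_{k}(x;q),
\qquad c_{n,k}=\frac{\langle S_{n},H_{k}\rangle}{\|H_{k}\|^{2}}.
\end{equation*}
The idea is to express the ordinary inner product $\langle S_{n},H_{k}\rangle$ in terms of the Sobolev one via
\begin{equation*}
\langle S_{n},H_{k}\rangle=\langle S_{n},H_{k}\rangle_{\lambda}-\lambda\,(\mathscr D_{q}S_{n})(\alpha;q)\,(\mathscr D_{q}H_{k})(\alpha;q),
\end{equation*}
and then exploit the fact that $\langle S_{n},H_{k}\rangle_{\lambda}=0$ for $0\le k\le n-1$ by definition of $S_{n}$.

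Substituting this into the expansion collapses the sum into the reproducing kernel of partial $q$-derivative type:
\begin{equation*}
S_{n}(x;q)=H_{n}(x;q)-\lambda\,(\mathscr D_{q}S_{n})(\alpha;q)\sum_{k=0}^{n-1}\frac{(\mathscr D_{q}H_{k})(\alpha;q)}{\|H_{k}\|^{2}}H_{k}(x;q)
=H_{n}(x;q)-\lambda\,(\mathscr D_{q}S_{n})(\alpha;q)\,K_{n-1,q}^{(0,1)}(x,\alpha).
\end{equation*}
This is essentially the formula, with the scalar $(\mathscr D_{q}S_{n})(\alpha;q)$ still to be identified.

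To close the system I would apply $\mathscr D_{q}$ to both sides in the variable $x$, evaluate at $x=\alpha$, and use the forward shift operator (\ref{FSop}) on $H_{n}$, which gives $(\mathscr D_{q}H_{n})(\alpha;q)=[n]_{q}^{(1)}H_{n-1}(\alpha;q)$. This yields the scalar equation
\begin{equation*}
(\mathscr D_{q}S_{n})(\alpha;q)\bigl(1+\lambda K_{n-1,q}^{(1,1)}(\alpha,\alpha)\bigr)=[n]_{q}^{(1)}H_{n-1}(\alpha;q),
\end{equation*}
from which I solve for $(\mathscr D_{q}S_{n})(\alpha;q)$ and plug the result back into the previous display to obtain (\ref{ConxF1}).

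The only delicate point is checking that the denominator $1+\lambda K_{n-1,q}^{(1,1)}(\alpha,\alpha)$ is nonzero, so that the division is legitimate. This is not really an obstacle: the definition of $K_{n-1,q}^{(1,1)}$ as a sum of squares $\sum_{k=0}^{n-1}((\mathscr D_{q}H_{k})(\alpha;q))^{2}/\|H_{k}\|^{2}$, together with positivity of $\|H_{k}\|^{2}$ from Proposition~\ref{S1-Proposition11}(2) and the assumption $\lambda\in\mathbb{R}^{+}$, guarantees that $1+\lambda K_{n-1,q}^{(1,1)}(\alpha,\alpha)>0$. Everything else is a direct assembly of identities already available in Sections \ref{S2-qCalculus} and \ref{S3-qHermiteSobType}, so I expect no computational obstacle beyond this straightforward verification.
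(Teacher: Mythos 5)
Your proposal is correct and follows essentially the same route as the paper's own proof: the Fourier expansion of $S_{n}$ in the $q$-Hermite basis, the collapse of the coefficient sum into $K_{n-1,q}^{(0,1)}(x,\alpha)$ via the Sobolev orthogonality, and the identification of the scalar $({\mathscr D}_{q}S_{n})(\alpha;q)$ by applying ${\mathscr D}_{q}$ and evaluating at $x=\alpha$. Your additional verification that $1+\lambda K_{n-1,q}^{(1,1)}(\alpha,\alpha)>0$, which the paper leaves implicit, is a welcome refinement rather than a deviation.
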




\begin{proof}
Taking into account the Fourier expansion%
\begin{equation*}
S_{n}(x;q)=H_{n}(x;q)+\sum_{0\leq k\leq n-1}a_{n,k}H_{k}(x;q).
\end{equation*}%
Next, from (\ref{DqProp}) and considering the orthogonality properties of $%
H_{n}(x;q)$, the coefficients in the previous expansion are given by 
\begin{equation*}
a_{n,k}=-\frac{\lambda {\mathscr D}_{q}S_{n}(\alpha ;q){\mathscr D}%
_{q}H_{k}(\alpha ;q)}{||H_{k}||^{2}},\quad 0\leq k\leq n-1.
\end{equation*}%
Thus%
\begin{equation*}
S_{n}(x;q)=H_{n}(x;q)-\lambda {\mathscr D}_{q}S_{n}(\alpha
;q)K_{n-1,q}^{(0,1)}(x,\alpha ).
\end{equation*}%
Applying the operator ${\mathscr D}_{q}$ to the previous equation we get%
\begin{equation*}
{\mathscr D}_{q}S_{n}(x;q)=[n]_{q}^{(1)}H_{n-1}(x;q)-\lambda {\mathscr D}%
_{q}S_{n}(\alpha ;q)K_{n-1,q}^{(1,1)}(x,\alpha ).
\end{equation*}%
After some manipulations, we deduce%
\begin{equation*}
{\mathscr D}_{q}S_{n}(\alpha ;q)=\frac{[n]_{q}^{(1)}H_{n-1}(\alpha ;q)}{%
1+\lambda K_{n-1,q}^{(1,1)}(\alpha ,\alpha )}.
\end{equation*}%
Therefore, we obtain (\ref{ConxF1}).
\end{proof}



As a consequence, we have the following result:

\begin{corollary}
\label{DqDq2HS} Let $\{S_{n}(x;q)\}_{n\geq 0}$ be the sequence of $q$%
-Hermite I-Sobolev type orthogonal polynomials of degree $n$. Then,
following statement holds, 
\begin{equation*}
{\mathscr D}_{q}S_{n}(x;q)=[n]_{q}^{(1)}H_{n-1}(x;q)-\lambda \frac{\lbrack
n]_{q}^{(1)}H_{n-1}(\alpha ;q)}{1+\lambda K_{n-1,q}^{(1,1)}(\alpha ,\alpha )}%
K_{n-1,q}^{(1,1)}(x,\alpha ).
\end{equation*}%
\end{corollary}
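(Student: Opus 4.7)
The plan is to deduce this corollary directly from the connection formula (\ref{ConxF1}) established in the previous proposition, simply by applying the $q$-derivative operator ${\mathscr D}_q$ to both sides. In fact, most of the work has already been done inside the proof of that proposition, where the intermediate identity
$$
{\mathscr D}_{q}S_{n}(x;q)=[n]_{q}^{(1)}H_{n-1}(x;q)-\lambda\,{\mathscr D}_{q}S_{n}(\alpha;q)\,K_{n-1,q}^{(1,1)}(x,\alpha)
$$
was derived; so the proof really amounts to reading off a consequence.

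The key steps I would carry out, in order, are as follows. First, observe that in (\ref{ConxF1}) the scalar factor
$$
c_n(\alpha):=\lambda\,\frac{[n]_{q}^{(1)}H_{n-1}(\alpha;q)}{1+\lambda K_{n-1,q}^{(1,1)}(\alpha,\alpha)}
$$
depends only on $\alpha$, not on $x$; hence applying ${\mathscr D}_{q}$ (which acts on the variable $x$) commutes with multiplication by $c_n(\alpha)$. Second, apply the forward shift operator (\ref{FSop}) with $k=1$ to obtain ${\mathscr D}_{q}H_{n}(x;q)=[n]_{q}^{(1)}H_{n-1}(x;q)$. Third, for the kernel piece use the very definition of the partial $q$-derivative notation introduced in Section \ref{S3-qHermiteSobType}, namely
$$
{\mathscr D}_{q,x}K_{n-1,q}^{(0,1)}(x,\alpha)=K_{n-1,q}^{(1,1)}(x,\alpha),
$$
since differentiating the explicit sum $\sum_{k=0}^{n-1}H_k(x;q){\mathscr D}_qH_k(\alpha;q)/\|H_k\|^2$ term-by-term in $x$ produces exactly the $(1,1)$ kernel. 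Collecting these three pieces yields the stated identity.

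I do not anticipate any genuine obstacle: the argument is essentially a one-line $q$-differentiation of an already-proved connection formula, together with the standard forward-shift property. The only bookkeeping care is to remember that the coefficient is $x$-independent and that ${\mathscr D}_{q,x}$ turns the $(0,1)$ kernel into the $(1,1)$ kernel; no consistency check involving the Sobolev inner product or the previous kernel decompositions in Propositions~\ref{S1-LemmaKernel0j} and \ref{S1-LemmaKerneli2} is required at this step, since they were already absorbed into (\ref{ConxF1}).
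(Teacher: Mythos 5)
Your proposal is correct and matches the paper's own route: the paper obtains this corollary by applying ${\mathscr D}_{q}$ to the connection formula inside the proof of the preceding proposition (where the identity ${\mathscr D}_{q}S_{n}(x;q)=[n]_{q}^{(1)}H_{n-1}(x;q)-\lambda {\mathscr D}_{q}S_{n}(\alpha ;q)K_{n-1,q}^{(1,1)}(x,\alpha )$ is derived and then combined with the explicit value of ${\mathscr D}_{q}S_{n}(\alpha ;q)$), which is exactly your computation with the substitution performed before rather than after the $q$-differentiation. Your three bookkeeping observations (the coefficient is $x$-independent, the forward shift relation (\ref{FSop}), and ${\mathscr D}_{q,x}K_{n-1,q}^{(0,1)}(x,\alpha )=K_{n-1,q}^{(1,1)}(x,\alpha )$ from the kernel definition) are precisely what the paper uses implicitly.
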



Now, we are in a position to state connection formulas between the orthogonal polynomials of the $q$-Hermite I and $q$-Hermite I-Sobolev type families. These lemmas provide the required results:

\begin{lemma}
Let $\{S_{n}(x;q)\}_{n\geq 0}$ be the sequence of $q$-Hermite I-Sobolev type
orthogonal polynomials of degree $n$. Then, we have 
\begin{equation}
S_{n}(x;q)={\mathcal{E}}_{1,n}(x)H_{n}(x;q)+F_{1,n}(x)H_{n-1}(x;q),
\label{ConexF_I}
\end{equation}%
where 
\begin{equation*}
{\mathcal{E}}_{1,n}(x)=1-\lambda \frac{\lbrack n]_{q}^{(1)}H_{n-1}(\alpha ;q)%
}{1+\lambda K_{n-1,q}^{(1,1)}(\alpha ,\alpha )}{\mathcal{A}}%
_{n}^{(1)}(x,\alpha ),
\end{equation*}
and 
\begin{equation*}
{\mathcal{F}}_{1,n}(x)=-\lambda \frac{\lbrack n]_{q}^{(1)}H_{n-1}(\alpha ;q)%
}{1+\lambda K_{n-1,q}^{(1,1)}(\alpha ,\alpha )}{\mathcal{B}}%
_{n}^{(1)}(x,\alpha ).
\end{equation*}
\end{lemma}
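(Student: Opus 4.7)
The plan is to obtain the stated connection formula as a direct consequence of the representation (\ref{ConxF1}) together with Proposition \ref{S1-LemmaKernel0j}, which already carries out all the technical work of expanding the mixed partial $q$-derivative of the reproducing kernel. In other words, most of the heavy lifting is done upstream, and the lemma itself is essentially a regrouping of terms.

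First, I would start from the Fourier-type expansion derived in the previous proposition,
\begin{equation*}
S_{n}(x;q)=H_{n}(x;q)-\lambda \,\frac{[n]_{q}^{(1)}H_{n-1}(\alpha ;q)}{1+\lambda K_{n-1,q}^{(1,1)}(\alpha ,\alpha )}\,K_{n-1,q}^{(0,1)}(x,\alpha ),
\end{equation*}
which expresses $S_n(x;q)$ as a linear combination of $H_n(x;q)$ and the mixed kernel evaluated on the second variable at $\alpha$. The point $\alpha \in \mathbb{R}\setminus[-1,1]$ guarantees that the denominator $1+\lambda K_{n-1,q}^{(1,1)}(\alpha,\alpha)$ does not vanish (since $\lambda>0$ and the kernel is positive at a mass outside the support), so the coefficient is well defined.

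Next, I would invoke Proposition \ref{S1-LemmaKernel0j}, specialised at $y=\alpha$, to substitute
\begin{equation*}
K_{n-1,q}^{(0,1)}(x,\alpha )=\mathcal{A}_{n}^{(1)}(x,\alpha )\,H_{n}(x;q)+\mathcal{B}_{n}^{(1)}(x,\alpha )\,H_{n-1}(x;q).
\end{equation*}
Plugging this into the previous identity and grouping the coefficients of $H_n(x;q)$ and $H_{n-1}(x;q)$, the coefficient of $H_n(x;q)$ becomes $1-\lambda \tfrac{[n]_{q}^{(1)}H_{n-1}(\alpha ;q)}{1+\lambda K_{n-1,q}^{(1,1)}(\alpha ,\alpha )}\mathcal{A}_{n}^{(1)}(x,\alpha )=\mathcal{E}_{1,n}(x)$, while the coefficient of $H_{n-1}(x;q)$ becomes $-\lambda \tfrac{[n]_{q}^{(1)}H_{n-1}(\alpha ;q)}{1+\lambda K_{n-1,q}^{(1,1)}(\alpha ,\alpha )}\mathcal{B}_{n}^{(1)}(x,\alpha )=\mathcal{F}_{1,n}(x)$, exactly matching the definitions in the statement.

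There is no real obstacle to surmount at this stage, since the delicate step is the kernel expansion encoded in Proposition \ref{S1-LemmaKernel0j}; after that, the lemma reduces to a one-line substitution and an identification of coefficients. The only minor care needed is to track that $y$ is specialised to the mass-point $\alpha$ consistently in both factors $\mathcal{A}_n^{(1)}$ and $\mathcal{B}_n^{(1)}$, and to verify once that the two polynomial combinations $H_n(x;q)$ and $H_{n-1}(x;q)$ are $\mathbb{R}(x)$-linearly independent so that the identification of coefficients is unambiguous.
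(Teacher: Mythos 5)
Your proposal is correct and follows exactly the paper's own (one-line) proof: substitute the kernel expansion of Proposition \ref{S1-LemmaKernel0j} at $y=\alpha$ into the representation (\ref{ConxF1}) and group the coefficients of $H_{n}(x;q)$ and $H_{n-1}(x;q)$. Your extra remarks on the non-vanishing denominator and on linear independence are harmless but not needed, since ${\mathcal{E}}_{1,n}$ and ${\mathcal{F}}_{1,n}$ are defined precisely as the resulting coefficients, so the identity holds by construction.
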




\begin{proof}
From (\ref{ConxF1}) and the Proposition \ref{S1-LemmaKernel0j}, the lemma holds.
\end{proof}



On the other hand, from previous Lemma and recurrence relation (\ref{ReR})
we obtain the following result 
\begin{equation}
S_{n-1}(x;q)={\mathcal{E}}_{2,n}(x)H_{n}(x;q)+{\mathcal{F}}%
_{2,n}(x)H_{n-1}(x;q),  \label{ConexF_II}
\end{equation}%
where 
\begin{equation*}
{\mathcal{E}}_{2,n}(x)=-\frac{{\mathcal{F}}_{1,n-1}(x)}{\gamma _{n-1}},
\end{equation*}%
and 
\begin{equation*}
{\mathcal{F}}_{2,n}(x)={\mathcal{E}}_{1,n-1}(x)-x{\mathcal{E}}_{2,n}(x).
\end{equation*}



\begin{lemma}
\label{detxi} Let $\{S_{n}(x;q)\}_{n\geq 0}$ be the sequence of $q$-Hermite
I-Sobolev type orthogonal polynomials of degree $n$. Then, next
statements hold, 
\begin{equation}
\Xi _{1,n}(x)H_{n}(x;q)=%
\begin{vmatrix}
S_{n}(x;q) & S_{n-1}(x;q) \\ 
{\mathcal{F}}_{1,n}(x) & {\mathcal{F}}_{2,n}(x)%
\end{vmatrix}%
,  \label{ConexF_III}
\end{equation}%
and 
\begin{equation}
\Xi _{1,n}(x)H_{n-1}(x;q)=-%
\begin{vmatrix}
S_{n}(x;q) & S_{n-1}(x;q) \\ 
{\mathcal{E}}_{1,n}(x) & {\mathcal{E}}_{2,n}(x)%
\end{vmatrix}%
,  \label{ConexF_IV}
\end{equation}%
where 
\begin{equation*}
\Xi _{1,n}(x)=%
\begin{vmatrix}
{\mathcal{E}}_{1,n}(x) & {\mathcal{E}}_{2,n}(x) \\ 
{\mathcal{F}}_{1,n}(x) & {\mathcal{F}}_{2,n}(x)%
\end{vmatrix}%
.
\end{equation*}
\end{lemma}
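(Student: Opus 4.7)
The plan is to view the two identities (\ref{ConexF_I}) and (\ref{ConexF_II}) as a $2\times 2$ linear system whose unknowns are the classical $q$-Hermite polynomials $H_n(x;q)$ and $H_{n-1}(x;q)$, and then invert it via Cramer's rule. Indeed, assembling (\ref{ConexF_I})--(\ref{ConexF_II}) in matrix form one obtains
\begin{equation*}
\begin{pmatrix} \mathcal{E}_{1,n}(x) & \mathcal{F}_{1,n}(x) \\ \mathcal{E}_{2,n}(x) & \mathcal{F}_{2,n}(x) \end{pmatrix}
\begin{pmatrix} H_{n}(x;q) \\ H_{n-1}(x;q) \end{pmatrix}
=
\begin{pmatrix} S_{n}(x;q) \\ S_{n-1}(x;q) \end{pmatrix},
\end{equation*}
and the determinant of the coefficient matrix is precisely $\Xi_{1,n}(x)$ as defined in the statement (expanded along the first row, $\mathcal{E}_{1,n}\mathcal{F}_{2,n}-\mathcal{E}_{2,n}\mathcal{F}_{1,n}$).

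The key step is therefore to apply Cramer's rule. Replacing the first column of the coefficient matrix by the right-hand side yields
\begin{equation*}
\Xi_{1,n}(x)\,H_{n}(x;q)=
\begin{vmatrix} S_{n}(x;q) & \mathcal{F}_{1,n}(x) \\ S_{n-1}(x;q) & \mathcal{F}_{2,n}(x) \end{vmatrix}
=\begin{vmatrix} S_{n}(x;q) & S_{n-1}(x;q) \\ \mathcal{F}_{1,n}(x) & \mathcal{F}_{2,n}(x) \end{vmatrix},
\end{equation*}
after transposing (which leaves the determinant invariant), giving (\ref{ConexF_III}). Analogously, replacing the second column produces
\begin{equation*}
\Xi_{1,n}(x)\,H_{n-1}(x;q)=
\begin{vmatrix} \mathcal{E}_{1,n}(x) & S_{n}(x;q) \\ \mathcal{E}_{2,n}(x) & S_{n-1}(x;q) \end{vmatrix}
=-\begin{vmatrix} S_{n}(x;q) & S_{n-1}(x;q) \\ \mathcal{E}_{1,n}(x) & \mathcal{E}_{2,n}(x) \end{vmatrix},
\end{equation*}
where the minus sign arises from swapping the two rows to match the presentation in (\ref{ConexF_IV}).

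There is essentially no hard step in this argument; it is a direct algebraic consequence of the connection formulas already established. The only subtlety worth a brief remark is that the procedure is purely formal at the level of polynomial identities and requires no hypothesis about $\Xi_{1,n}(x)\not\equiv 0$, since we are not dividing by $\Xi_{1,n}(x)$ but rather writing the equations in the ``cleared'' form $\Xi_{1,n}(x)H_{n}(x;q)=\cdots$ and $\Xi_{1,n}(x)H_{n-1}(x;q)=\cdots$. Thus the proof will reduce to just a couple of lines invoking (\ref{ConexF_I}), (\ref{ConexF_II}), and Cramer's rule.
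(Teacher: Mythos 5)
Your proposal is correct and is in substance the same argument as the paper's: the paper proves (\ref{ConexF_III}) by multiplying (\ref{ConexF_I}) by ${\mathcal{F}}_{2,n}(x)$ and (\ref{ConexF_II}) by $-{\mathcal{F}}_{1,n}(x)$ and adding (and analogously for (\ref{ConexF_IV})), which is precisely the $2\times 2$ elimination that your Cramer's-rule formulation repackages. Your observation that the cleared identities $\Xi_{1,n}(x)H_{n}(x;q)=\cdots$ and $\Xi_{1,n}(x)H_{n-1}(x;q)=\cdots$ require no hypothesis $\Xi_{1,n}(x)\not\equiv 0$ is accurate and is left implicit in the paper.
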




\begin{proof}
Multiplying (\ref{ConexF_I}) by ${\mathcal{F}}_{2,n}(x)$ and (\ref{ConexF_II}) by $-{\mathcal{F}}_{1,n}(x)$, adding and
simplifying the resulting equations, we deduce (\ref{ConexF_III}). In
addition, we can now proceed analogously to get (\ref{ConexF_IV}).
\end{proof}


We end this section stating another connection formula between the first-order $q$-derivatives of the $q$-Hermite I-Sobolev type orthogonal polynomials and the $q$-Hermite I polynomials:

\begin{lemma}
\label{DqHS} Let $\{S_{n}(x;q)\}_{n\geq 0}$ be the sequence of $q$-Hermite
I-Sobolev type orthogonal polynomials of degree $n$. Then, following
statements hold, 
\begin{equation}
{\mathscr D}_{q}S_{n}(x;q)={\mathcal{E}}_{3,n}(x)H_{n}(x;q)+{\mathcal{F}}%
_{3,n}(x)H_{n-1}(x;q),  \label{DqHsE3F3}
\end{equation}%
where
\begin{equation*}
{\mathcal{E}}_{3,n}(x)=-\lambda \frac{\lbrack
n]_{q}^{(1)}H_{n-1}(\alpha ;q)%
}{1+\lambda K_{n-1,q}^{(1,1)}(\alpha ,\alpha )}{\mathcal{C}}_{1,n}(x,\alpha
),
\end{equation*}%
and 
\begin{equation*}
{\mathcal{F}}_{3,n}(x)=[n]_{q}^{(1)}-\lambda \frac{\lbrack
n]_{q}^{(1)}H_{n-1}(\alpha ;q)}{1+\lambda K_{n-1,q}^{(1,1)}(\alpha ,\alpha )}%
{\mathcal{D}}_{1,n}(x,\alpha ).
\end{equation*}
\end{lemma}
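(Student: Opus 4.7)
The plan is a direct substitution using two results already established. Specifically, Corollary \ref{DqDq2HS} gives
$${\mathscr D}_{q}S_{n}(x;q)=[n]_{q}^{(1)}H_{n-1}(x;q)-\lambda \frac{[n]_{q}^{(1)}H_{n-1}(\alpha ;q)}{1+\lambda K_{n-1,q}^{(1,1)}(\alpha ,\alpha )}K_{n-1,q}^{(1,1)}(x,\alpha ),$$
so the issue is to rewrite the mixed-kernel term $K_{n-1,q}^{(1,1)}(x,\alpha )$ as a linear combination of $H_{n}(x;q)$ and $H_{n-1}(x;q)$. That is exactly what Proposition \ref{S1-LemmaKerneli2} provides, specialized at $y=\alpha$.

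First I would invoke Proposition \ref{S1-LemmaKerneli2} with $y=\alpha$ to write
$$K_{n-1,q}^{(1,1)}(x,\alpha )={\mathcal{C}}_{1,n}(x,\alpha )H_{n}(x;q)+{\mathcal{D}}_{1,n}(x,\alpha )H_{n-1}(x;q).$$
Then I would substitute this expression into the formula for ${\mathscr D}_{q}S_{n}(x;q)$ coming from Corollary \ref{DqDq2HS} and group the coefficients of $H_{n}(x;q)$ and $H_{n-1}(x;q)$ separately. The $H_{n}(x;q)$-coefficient yields precisely ${\mathcal{E}}_{3,n}(x)$, while the $H_{n-1}(x;q)$-coefficient combines the leading $[n]_{q}^{(1)}$ term with the ${\mathcal{D}}_{1,n}(x,\alpha )$-term to give ${\mathcal{F}}_{3,n}(x)$, thus producing the claimed identity (\ref{DqHsE3F3}).

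There is no real obstacle here: both auxiliary identities have already been derived, so the proof amounts essentially to an algebraic bookkeeping step. The only point requiring minor care is ensuring that the sign and the factor $\lambda [n]_{q}^{(1)}H_{n-1}(\alpha ;q)/(1+\lambda K_{n-1,q}^{(1,1)}(\alpha ,\alpha ))$ are correctly attached to both ${\mathcal{C}}_{1,n}(x,\alpha )$ and ${\mathcal{D}}_{1,n}(x,\alpha )$ when splitting the kernel; once that is done, the statement follows immediately.
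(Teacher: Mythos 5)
Your proposal is correct and coincides with the paper's own proof: substituting the expansion of $K_{n-1,q}^{(1,1)}(x,\alpha)$ from Proposition \ref{S1-LemmaKerneli2} into the expression for ${\mathscr D}_{q}S_{n}(x;q)$ of Corollary \ref{DqDq2HS} and regrouping the coefficients of $H_{n}(x;q)$ and $H_{n-1}(x;q)$ is exactly the argument given there. No gaps; the bookkeeping point you flag about attaching the common factor to both ${\mathcal{C}}_{1,n}(x,\alpha)$ and ${\mathcal{D}}_{1,n}(x,\alpha)$ is precisely what the paper's displayed rearrangement carries out.
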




\begin{proof}

The statements easily follow by replacing the expansion of $K_{n-1,q}^{(1,1)}(x,y)$ as a linear combination of $H_{n}(x;q)$ and $H_{n-1}(x;q)$ (formula \ref{kernel1j}, Proposition \ref{S1-LemmaKerneli2}) in the expression of ${\mathscr D}_{q}S_{n}(x;q)$ (Corollary \ref{DqDq2HS}):

\begin{equation*}
{\mathscr D}_{q}S_{n}(x;q)=
\end{equation*}
\begin{equation*}
[n]_{q}^{(1)}H_{n-1}(x;q)-\lambda \frac{\lbrack n]_{q}^{(1)}H_{n-1}(\alpha ;q)}{1+\lambda K_{n-1,q}^{(1,1)}(\alpha ,\alpha )}%
\left( {\mathcal{C}}_{1,n}(x,y)H_{n}(x;q)+{\mathcal{D}}%
_{1,n}(x,y)H_{n-1}(x;q) \right)
\end{equation*}

and rearranging the terms keeping the linear combination of the two consecutive $q$-Hermite I polynomials of degrees $n$ and $n-1$:

\begin{equation*}
{\mathscr D}_{q}S_{n}(x;q)=
\end{equation*}
\begin{equation*}
[n]_{q}^{(1)}H_{n-1}(x;q)-\lambda \frac{\lbrack
n]_{q}^{(1)}H_{n-1}(\alpha ;q)}{1+\lambda K_{n-1,q}^{(1,1)}(\alpha ,\alpha )}%
\left( {\mathcal{C}}_{1,n}(x,y)H_{n}(x;q)+{\mathcal{D}}%
_{1,n}(x,y)H_{n-1}(x;q) \right)= 
\end{equation*}
\begin{equation*}
[n]_{q}^{(1)}H_{n-1}(x;q)-\lambda \frac{\lbrack
n]_{q}^{(1)}H_{n-1}(\alpha ;q)}{1+\lambda K_{n-1,q}^{(1,1)}(\alpha ,\alpha )}%
{\mathcal{C}}_{1,n}(x,y)H_{n}(x;q)
\end{equation*}
\begin{equation*}
-\lambda \frac{\lbrack
n]_{q}^{(1)}H_{n-1}(\alpha ;q)}{1+\lambda K_{n-1,q}^{(1,1)}(\alpha ,\alpha )}%
{\mathcal{D}}%
_{1,n}(x,y)H_{n-1}(x;q)= 
\end{equation*}
\begin{equation*}
\left( -\lambda \frac{\lbrack
n]_{q}^{(1)}H_{n-1}(\alpha ;q)}{1+\lambda K_{n-1,q}^{(1,1)}(\alpha ,\alpha )}%
{\mathcal{C}}_{1,n}(x,y) \right) H_{n}(x;q)
\end{equation*}
\begin{equation*}
+ \left( [n]_{q}^{(1)}-\lambda \frac{\lbrack
n]_{q}^{(1)}H_{n-1}(\alpha ;q)}{1+\lambda K_{n-1,q}^{(1,1)}(\alpha ,\alpha )}%
{\mathcal{D}}%
_{1,n}(x,y) \right) H_{n-1}(x;q).
\end{equation*}

\end{proof}



\section{Annihilation operator}

\label{S5-Ladder}



In this section, we finally provide an expression of the annihilation operator for this new family of $q$-Hermite I-Sobolev type polynomials:

\begin{proposition}[Annihilation operator]
\label{STHST} The annihilation operator of the $q$-Hermite I-Sobolev type
orthogonal polynomials $S_{n}(x;q)$, has the following explicit expression%
\[
\mathfrak{a}_{n}\,S_{n}(x;q)=S_{n-1}(x;q),
\]%
where%
\[
\mathfrak{a}_{n}:=\frac{1}{{\mathcal{F}}_{4,n}(x)}\left( \Xi _{1,n}(x){%
\mathscr D}_{q}-{\mathcal{E}}_{4,n}(x)I\right) ,
\]%
being $I$ the identity operator. One also has%
\[
{\mathcal{E}}_{4,n}(x)=-\begin{vmatrix} {\mathcal{E}}_{2,n}(x) &
{\mathcal{E}}_{3,n}(x) \\ {\mathcal{F}}_{2,n}(x) &
{\mathcal{F}}_{3,n}(x)\end{vmatrix},
\]%
and 
\[
{\mathcal{F}}_{4,n}(x)=\begin{vmatrix} {\mathcal{E}}_{1,n}(x) &
{\mathcal{E}}_{3,n}(x) \\ {\mathcal{F}}_{1,n}(x) &
{\mathcal{F}}_{3,n}(x)\end{vmatrix}.
\]
\end{proposition}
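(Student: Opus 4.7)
The plan is to combine the three connection-type identities we already have, namely the expansions
\[
S_{n}(x;q)={\mathcal{E}}_{1,n}(x)H_{n}(x;q)+{\mathcal{F}}_{1,n}(x)H_{n-1}(x;q),\qquad
S_{n-1}(x;q)={\mathcal{E}}_{2,n}(x)H_{n}(x;q)+{\mathcal{F}}_{2,n}(x)H_{n-1}(x;q),
\]
from \eqref{ConexF_I}--\eqref{ConexF_II} together with the $q$-derivative expansion
\[
{\mathscr D}_{q}S_{n}(x;q)={\mathcal{E}}_{3,n}(x)H_{n}(x;q)+{\mathcal{F}}_{3,n}(x)H_{n-1}(x;q)
\]
from Lemma \ref{DqHS}. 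The first step is to invert the $2\times 2$ system afforded by Lemma \ref{detxi}: formulas \eqref{ConexF_III} and \eqref{ConexF_IV} give, assuming $\Xi_{1,n}(x)\neq 0$,
\[
H_{n}(x;q)=\frac{S_{n}(x;q){\mathcal{F}}_{2,n}(x)-S_{n-1}(x;q){\mathcal{F}}_{1,n}(x)}{\Xi_{1,n}(x)},
\qquad
H_{n-1}(x;q)=\frac{S_{n-1}(x;q){\mathcal{E}}_{1,n}(x)-S_{n}(x;q){\mathcal{E}}_{2,n}(x)}{\Xi_{1,n}(x)}.
\]

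Next I would substitute these two expressions into the identity for ${\mathscr D}_{q}S_{n}(x;q)$ given by Lemma \ref{DqHS}, multiply through by $\Xi_{1,n}(x)$, and collect the coefficients of $S_{n}(x;q)$ and $S_{n-1}(x;q)$. A direct computation yields
\[
\Xi_{1,n}(x){\mathscr D}_{q}S_{n}(x;q)=\bigl({\mathcal{E}}_{3,n}(x){\mathcal{F}}_{2,n}(x)-{\mathcal{F}}_{3,n}(x){\mathcal{E}}_{2,n}(x)\bigr)S_{n}(x;q)
+\bigl({\mathcal{E}}_{1,n}(x){\mathcal{F}}_{3,n}(x)-{\mathcal{E}}_{3,n}(x){\mathcal{F}}_{1,n}(x)\bigr)S_{n-1}(x;q).
\]
The two bracketed expressions are precisely the $2\times 2$ determinants defining ${\mathcal{E}}_{4,n}(x)$ and ${\mathcal{F}}_{4,n}(x)$ in the statement, so the right-hand side collapses to ${\mathcal{E}}_{4,n}(x)S_{n}(x;q)+{\mathcal{F}}_{4,n}(x)S_{n-1}(x;q)$.

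The last step is simply algebraic rearrangement: isolating $S_{n-1}(x;q)$ and dividing by ${\mathcal{F}}_{4,n}(x)$ gives
\[
S_{n-1}(x;q)=\frac{1}{{\mathcal{F}}_{4,n}(x)}\bigl(\Xi_{1,n}(x){\mathscr D}_{q}-{\mathcal{E}}_{4,n}(x)I\bigr)S_{n}(x;q),
\]
which is exactly the claimed action of $\mathfrak{a}_{n}$. There is no deep obstacle in the argument; the only delicate point is bookkeeping the signs so that the coefficients emerging from the substitution are recognized as the determinants $-\det({\mathcal{E}}_{2,n},{\mathcal{E}}_{3,n};{\mathcal{F}}_{2,n},{\mathcal{F}}_{3,n})$ and $\det({\mathcal{E}}_{1,n},{\mathcal{E}}_{3,n};{\mathcal{F}}_{1,n},{\mathcal{F}}_{3,n})$ in the form stated. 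One should also note in passing that the formula is valid wherever both $\Xi_{1,n}(x)$ and ${\mathcal{F}}_{4,n}(x)$ are non-zero, which is the natural non-degeneracy hypothesis built into the statement.
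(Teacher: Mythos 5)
Your proposal is correct and follows essentially the same route as the paper's own proof: both arguments combine Lemmas \ref{detxi} and \ref{DqHS}, multiply the expansion of ${\mathscr D}_{q}S_{n}(x;q)$ through by $\Xi_{1,n}(x)$, collect the coefficients of $S_{n}(x;q)$ and $S_{n-1}(x;q)$, and recognize them as the determinants $-{\mathcal{E}}_{4,n}(x)$-wise and ${\mathcal{F}}_{4,n}(x)$ before the final rearrangement. Your Cramer-style inversion of the system for $H_{n}(x;q)$ and $H_{n-1}(x;q)$ is merely a cosmetic reordering of the identical computation, and since you multiply back by $\Xi_{1,n}(x)$ at the end, the intermediate hypothesis $\Xi_{1,n}(x)\neq 0$ costs nothing (the resulting identity is polynomial and holds everywhere).
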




\begin{proof}
Using the Lemmas \ref{detxi} and \ref{DqHS}, respectively, we get%
\[
\begin{vmatrix} S_{n}(x;q) & S_{n-1}(x;q) \\ {\mathcal{F}}_{1,n}(x) &
{\mathcal{F}}_{2,n}(x)\end{vmatrix}{\mathcal{E}}_{3,n}(x)-\begin{vmatrix}
S_{n}(x;q) & S_{n-1}(x;q) \\ {\mathcal{E}}_{1,n}(x) &
{\mathcal{E}}_{2,n}(x)\end{vmatrix}{\mathcal{F}}_{3,n}(x)
\]%
\begin{eqnarray*}
&=&{\mathcal{E}}_{3,n}(x){\mathcal{F}}_{2,n}(x)S_{n}(x;q)-{\mathcal{E}}%
_{3,n}(x){\mathcal{F}}_{1,n}(x)S_{n-1}(x;q) \\
&&\qquad -{\mathcal{E}}_{2,n}(x){\mathcal{F}}_{3,n}(x)S_{n}(x;q)+{\mathcal{E}%
}_{1,n}(x){\mathcal{F}}_{3,n}(x)S_{n-1}(x;q)
\end{eqnarray*}%
\[
=-\begin{vmatrix} {\mathcal{E}}_{2,n}(x) & {\mathcal{E}}_{3,n}(x) \\
{\mathcal{F}}_{2,n}(x) & {\mathcal{F}}_{3,n}(x)\end{vmatrix}S_{n}(x;q)+%
\begin{vmatrix} {\mathcal{E}}_{1,n}(x) & {\mathcal{E}}_{3,n}(x) \\
{\mathcal{F}}_{1,n}(x) & {\mathcal{F}}_{3,n}(x)\end{vmatrix}S_{n-1}(x;q).
\]%
The proof is complete after reordering terms in the above expression.
\end{proof}



\section{Conclusions}

\label{S6-Conclusions}



In this paper, a new family of $q$-Hermite I-Sobolev type polynomials, orthogonal with respect to the $q$-discrete measure of the $q$-Hermite I polynomials modified by means of discrete measures with a single mass-point and first-order $q$-derivatives, has been presented. Interesting and intrincate results for this sequence of polynomials, such as several connection formulas and an explicit expression for its annihilation operator, have also been obtained.

A pending question is the expression for the corresponding creation operator, as well as a second order $q$-difference equation satisfied by this $q$-Hermite I Sobolev-type polynomials. Thus, the results presented in this paper can be the starting point to study the modified $q$-Hermite polynomials by adding more mass-points at the discrete part of the measure to later include higher-order $q$-derivatives. This study is not easy, since it involves a greater number of calculations as well as a greater complexity in the expressions, so new strategies are required.

Another important line of future research would deal with the analysis of the behavior of the zeros of these families and their possible physical interpretations.



\section*{Acknowledgments}



The work of the first author (C.H.) was funded by Direcci\'on General de Investigaci\'on e Innovaci\'on, Consejer\'ia de Educaci\'on e Investigaci\'on of the Comunidad de Madrid (Spain), and Universidad de Alcal\'a, under grant CM/JIN/2019-010, Proyectos de I+D para J\'{o}venes Investigadores de la Universidad de Alcal\'a 2019.



\end{document}